\documentclass{amsart}

\usepackage{amssymb,amsmath,latexsym}

\newtheorem{theorem}{Theorem}[section]
\newtheorem{lemma}[theorem]{Lemma}

\newtheorem{definition}[theorem]{Definition}
\newtheorem{proposition}[theorem]{Proposition}
\newtheorem{example}[theorem]{Example}

\newcommand\ds{\displaystyle}
\newcommand{\Hil}{\mathcal H}
\newcommand{\BH}{{\mathbf B}(\Hil)}

\newcommand{\C}{\mathbb C}
\newcommand{\D}{\mathbb D}
\newcommand{\N}{\mathbb N}

\newcommand{\0}{\{ 0 \}}
\newcommand{\calL}{\mathcal L}
\newcommand{\calM}{\mathcal M}
\newcommand{\calN}{\mathcal N}
\newcommand{\HC}{\operatorname{HC}}

\renewcommand{\span}{\operatorname{span}}

\newcommand{\ran}{\operatorname{ran}}

\newcommand{\Orb}{\operatorname{Orb}}

\renewcommand{\emptyset}{\varnothing}
\def\conj#1{\overline{#1}}
\newcommand{\restricted}{\bigm|}

\title{Subspace Hypercyclicity}

\author{Blair F.~Madore}
\address{Department of Mathematics, State University of New York
  College at Potsdam, Potsdam, NY 13676-2294, U.S.A.}
\email{bfmadore@potsdam.edu}

\author{Rub\'en A.~Mart\'inez-Avenda\~no}
\address{Centro de Investigaci\'on en Matem\'aticas,  Universidad
  Aut\'onoma del Estado de Hidalgo, Ciudad Universitaria,
  Carr.~Pachuca--Tulancingo, km 4.5,  Pachuca, Hidalgo, 42184, Mexico}
\email{rubenma@uaeh.edu.mx}

\keywords{Hypercyclicity, dynamics of linear operators in Hilbert space}

\subjclass[2000]{47A16,47B37,37B99}

\thanks{The authors would like to thank the State University of New
  York College at Potsdam and Universidad Aut\'onoma del
  Estado de Hidalgo for making this collaboration possible.}

\begin{document}

\begin{abstract}
A bounded linear operator $T$ on Hilbert space is subspace-hypercyclic
for a subspace $\calM$ if there exists a vector whose orbit under
$T$ intersects the subspace in a relatively dense set. We construct
examples to show that subspace-hypercyclicity is interesting,
including a nontrivial subspace-hypercyclic operator that is not
hypercyclic. There is a Kitai-like criterion that implies
subspace-hypercyclicity and although the spectrum of a
subspace-hypercyclic operator must intersect the unit circle, not every
component of the spectrum will do so. We show that, like
hypercyclicity, subspace-hypercyclicity is a strictly
infinite-dimensional phenomenon. Additionally, compact or hyponormal
operators can never be subspace-hypercyclic.
\end{abstract}

\maketitle 

\section{Introduction}

An operator on a Banach space is called {\em hypercyclic} if there is a
vector whose orbit under the operator is dense in the space; such a
vector is called a hypercyclic vector for the operator. It is somewhat
surprising that hypercyclic operators exist since they do not exist on
finite-dimensional Banach spaces. The study of hypercyclicity goes
back a long way, and has been studied in more general settings, for example in
topological vector spaces. A good reference are the survey papers of
K.-G.~Grosse-Erdmann~\cite{GE1,GE2}, which contain a guide to what is
known and not known about hypercyclicity (and universality, a more
general notion). In this paper, we study the problem of when the orbit
of a vector under an operator, intersected with a subspace, is dense
in that subspace.

We start by giving some context to our research. The first (and still
most famous) example of a hypercyclic operator on Banach spaces was
given by Rolewicz~\cite{rolewicz} in 1969: the example is the backward
shift on $\ell^p$ multiplied by a complex number of modulus bigger
than $1$. The first systematic study of hypercyclicity on Banach
spaces occurred in Kitai's doctoral dissertation ~\cite{kitai}, where
the famous Hypercyclicity Criterion was introduced. This criterion was
rediscovered later by Gethner and Shapiro in \cite{GeSh}. It  was an
open question for many years whether (a stronger version of) this
criterion was in fact equivalent to hypercyclicity: it was recently
shown by de la Rosa and Read \cite{dlRRe} that it is not. Furthermore,
Bayart and Matheron showed in \cite{BaMa} that the equivalence fails
on classical Banach spaces, and even on Hilbert space.

One reason the concept of hypercyclicity is interesting is because
it relates to the invariant subset problem: does every bounded
operator on a Banach space have a nontrivial invariant closed subset? An
operator has no nontrivial invariant closed subsets if and only if
every nonzero vector is hypercyclic. It is known that such operators
exist on Banach spaces \cite{read} but its existence is still an open
problem in Hilbert space. 

The question of hypercyclicity is really a dynamical one. As such, one
is interested in the possible behaviour of the orbit of a vector under
the operator. For example, if such an orbit is not dense, what other
``forms'' might it have? One possible direction for studying these
questions was undertaken in \cite{GE3} (as cited in \cite[Section
1a]{GE1}): namely, an operator is {\em hypercyclic for a nonempty
  closed set $A$} if there exists a vector $x$ such that the closure
of the orbit of $x$ under the operator contains $A$. 

Compare this concept with the remarkable result of Bourdon and
Feldman~\cite{BoFe}: if the orbit of a vector  under an operator is
somewhere dense, then it is everywhere dense; that is, if the closure
of the orbit has nonempty interior, then the operator must be
hypercyclic. Thus, if we want to study hypercyclicity for closed sets
$A$ we must restrict ourselves to cases where $A$ has empty interior;
for example, when $A$ is a (nontrivial) subspace.

In the present paper, we undertake the study of a special case. We ask
ourselves whether it is possible for a bounded operator on Hilbert
space to have the property that the orbit of some vector under the
operator ``touches a subspace enough times to fill it''. In more
technical jargon, if $\N_0:=\N \cup \{ 0 \}$, we ask whether the orbit
$$
\Orb(T,x):=\{ T^n x : n \in  \N_0\}
$$
has the property that $\Orb(T,x) \cap \calM$ is dense in $\calM$ for a
nonzero subspace $\calM$. We call this concept subspace-hypercyclicity.

We should note that,  although many of the results we present in this
paper are undoubtedly true for Banach spaces, we prefer to deal with the Hilbert 
space case exclusively, for the sake of simplicity. Also, in Hilbert space one 
might follow other avenues of research. For
example, given a subspace $\calM$ and the orthogonal projection $P$ on it,
one may ask if it is possible for $P(\Orb(T,x))$ to be dense in the
subspace $\calM$. We may investigate this question in further papers.

The paper is organized as follows. In the second section of this
paper, we will introduce formally the concept of
subspace-hypercyclicity and show some ``trivial'' examples. They will
be trivial in the sense that the subspace $\calM$ is invariant under
the operator.

Next, in the third section, we show the existence of nontrivial
examples. For this, we introduce the concept of subspace-transitivity
and we show that a ``Subspace-Hypercyclicity Criterion'' holds. We
also show, by giving a further example, that said criterion is not a
necessary condition. The examples we show in this section are all
based on the backward shift on $\ell^2$.

We prove in the fourth section of this paper that
subspace-hypercyclicity, like hypercyclicity, is a purely
infinite-dimensional concept. Namely, if an operator is subspace-hypercyclic 
for some subspace $\calM$, then $\calM$ is not finite-dimensional. Furthermore, 
there are no compact subspace-hypercyclic operators nor hyponormal
subspace-hypercyclic operators.

In the last section of this paper, we conclude with some open questions for future research.

\subsubsection*{Acknowledgement.} We thank the referee for remarks that improved our paper. In particular, we
thank the referee for the equivalence after Definition \ref{def:transitivity} that
allowed the proofs of Lemma~\ref{le:dense} and Theorem~\ref{th:kitai} to be greatly
simplified.

\section{Definition and Some Trivial Examples}

In this note $\Hil$ always denotes a separable Hilbert space over
$\C$, the field of complex numbers. Usually, it will be the case that
$\Hil$ is infinite-dimensional and we will explicitly indicate when a
result or definition only holds for finite or infinite
dimensions. According to usual practice, whenever we talk about a
subspace $\calM$ of $\Hil$ we will assume that $\calM$ is
topologically closed.

We will denote by $\BH$ the set of all bounded linear operators on
$\Hil$. We usually will refer to elements of $\BH$ as just
``operators''. The use of the symbol ``:='' indicates a definition.

We start with our main definition.

\begin{definition}
 Let $T \in \BH$ and let $\calM$ be a nonzero subspace of $\Hil$. We
 say that $T$ is {\em subspace-hypercyclic} for $\calM$ if there exists $x
 \in \Hil$ such that $\Orb(T,x) \cap \calM$ is dense in $\calM$. We
 call $x$ a {\em subspace-hypercyclic vector}.
\end{definition}

The definition above reduces to the classical definition of
hypercyclicity if $\calM = \Hil$. Observe also that the
subspace-hypercyclic vector $x$ is necessarily nonzero and we may assume,
if needed, that $x$ belongs to $\calM$.

We start by showing the simplest example of a subspace-hypercyclic
operator that is not hypercyclic.

\begin{example}\label{ex:sum}
Let $T$ be a hypercyclic operator on $\Hil$ with hypercyclic vector
$x$ and let $I$ be the identity operator on $\Hil$. Then, the operator
$T\oplus I: \Hil\oplus\Hil \to \Hil\oplus\Hil$ is subspace-hypercyclic
for the subspace $\calM:= \Hil \oplus \0$ with subspace-hypercyclic
vector $x \oplus 0$. Clearly, $T \oplus I$ is not hypercyclic.
\end{example}

The above example is trivial in the sense that $\calM$ is an
invariant subspace for $T\oplus I$. In fact, it is obvious that $T
\oplus I\restricted_{\calM}$ is in fact a hypercyclic operator.

The following example is trivial in the same sense.

\begin{example}\label{ex:eq}
Let $T$ be a hypercyclic operator on $\Hil$  with hypercyclic vector
$x$ and assume that $C \in \BH$ is nonzero and has closed range
$\calM$. If $A \in \BH$ satisfies the equation $AC=CT$, then it can easily be
checked that $A$ is subspace-hypercyclic for $\calM$ with
subspace-hypercyclic vector $Cx$. 
\end{example}

The subspace-hypercyclicity of the above example is obviously due to
the fact that $A\restricted_\calM$ is a hypercyclic operator, which
can be proven easily. (Alternatively, this follows from well-known facts for
transitive and hypercyclic operators: two references are the paper \cite[Lemma
2.1]{MP} and the notes \cite[Proposition 1.13]{shapiro}).

From the above two examples, one can obtain subspace-hypercyclic
operators from known hypercyclic operators, the simplest examples of
which are multiples of the backward shift. 

Recall that on $\ell^2$, the Hilbert space of all square summable
complex sequences, the backward shift $B$ is defined as
$$
B(x_0, x_1, x_2, x_3,\ldots):=(x_1, x_2, x_3, x_4, \ldots).
$$
As we mentioned in the introduction, it was shown in \cite{rolewicz}
that $\lambda B$ is a hypercyclic operator if $|\lambda|>1$. Thus
setting $T=\lambda B$ in Example \ref{ex:sum} provides a concrete
example of a subspace-hypercyclic operator.

One can find another concrete example by setting $T=\lambda B$ and
finding operators $A$ and $C$ that satisfy the conditions of Example
\ref{ex:eq}, but one should note that if $T$ is a multiple of the
backward shift and $A$ is injective, $C$ would have to be zero if it
is to satisfy the equation $A C = C T$. (The proof is easy, see
\cite{MA} if needed.)

Nevertheless, one can obtain an interesting example. For the
following, the reader should be familiar with some basic facts about
the Hardy-Hilbert space $\mathbf{H}^2$. Let $\D$ be the open unit disk
in $\C$ and $\phi: \D \to \D$ be an analytic funtion. Let $T_\phi$ denote
the analytic Toeplitz operator on $\mathbf{H}^2$ defined by $T_\phi (f) = \phi f$,
let $B$ be the backward shift on $\mathbf{H}^2$ and let $C_\phi$ denote the composition
operator on $\mathbf{H}^2$ defined by $C_\phi (f) = f \circ \phi$
(see, for example, \cite{MR} for the definition of $\mathbf{H}^2$ and
the basic properties of the above operators).

\begin{example}\label{ex:toep}
Let $\phi \in \mathbf{H}^2$ be an inner function with
$\phi(0)=0$ and $\phi$ not the identity function. Then $T_\phi^*
C_\phi = C_\phi B$ and hence $\lambda T_\phi^*$ is
subspace-hypercyclic for the subspace $\ran C_\phi$ if $|\lambda|> 1$.
\end{example}

On the other hand, observe that $\lambda T_\phi^*$ is hypercyclic, by the
characterization given by Godefroy and Shapiro~\cite{GoSh}. Note that
a subspace-hypercyclic vector for $\lambda T_\phi^*$ with respect to
$\ran C_\phi$ is necesarily not a hypercyclic vector for $\lambda
T_\phi^*$. 

As in the case of hypercyclicity, analytic Toeplitz operators can
never be subspace-hypercyclic. Indeed, suppose $T_\phi$ is an analytic
Toeplitz operator which is subspace-hypercyclic for a subspace
$\calM$. Let $k_\lambda \in {\mathbf H}^2$ be the reproducing kernel
for $\lambda \in \D$. Then, as is well-known, $k_\lambda \in
\ker(T_\phi^* - \conj{\phi(\lambda)})$. By Proposition
\ref{prop:kernel} below, we have that $k_\lambda \subseteq
\calM^\perp$ for all $\lambda \in \D$. The fact that the reproducing
kernels have a dense span in $\mathbf{H}^2$ implies that $\mathbf{H}^2
\subseteq \calM^\perp$, and hence that $\calM=\0$.

\section{Some nontrivial examples and a subspace-hypercyclicity
  criterion}

We would like to find examples of subspace-hypercyclic operators for a
subspace $\calM$ such that $\calM$ is not invariant under the
operator. The results presented in this section, including a
subspace-hypercyclicity criterion, will help us achieve that goal.

Let us denote the set of subspace-hypercyclic vectors for $\calM$ by
$$
\HC(T,\calM):=\{ x \in \Hil \, : \, \Orb(T,x) \cap \calM \hbox{ is
  dense in } \calM \}.
$$

The following lemma holds true in the classical setting and the proof
in our setting is the same. We include here its short proof for the sake of completeness.

\begin{lemma}
Let $T \in \BH$ and let $\calM$ be a nonzero subspace of $\Hil$. Then
$$
\HC(T,\calM) = \bigcap_{j=1}^{\infty} \bigcup_{n=0}^{\infty} T^{-n}(B_j)
$$
where $\{ B_j \}$ is a countable open basis for the relative topology
of $\calM$ as a subspace of $\Hil$.
\end{lemma}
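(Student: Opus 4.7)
The plan is standard point-set topology. Unpacking the definition, $x \in \HC(T,\calM)$ means that $\Orb(T,x) \cap \calM$ is dense in $\calM$, i.e.\ every nonempty relatively open subset $U$ of $\calM$ meets $\Orb(T,x)$. Since $U \subseteq \calM$, this is equivalent to the existence of some $n \in \N_0$ with $T^n x \in U$.

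The key reduction is that it suffices to check this condition on the countable basis $\{B_j\}$: any nonempty relatively open $U \subseteq \calM$ contains some $B_j$, so a point of $\Orb(T,x)$ lying in $B_j$ automatically lies in $U$, and conversely each $B_j$ is itself relatively open. Therefore $x \in \HC(T,\calM)$ if and only if for every $j \in \N$ there exists $n \in \N_0$ with $T^n x \in B_j$.

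The last condition rewrites directly: $T^n x \in B_j$ means $x \in T^{-n}(B_j)$, so ``for every $j$ there exists $n$'' becomes
$$
x \in \bigcap_{j=1}^{\infty} \bigcup_{n=0}^{\infty} T^{-n}(B_j),
$$
which is the desired equality of sets. There is no real obstacle; one only needs to note that $B_j \subseteq \calM$ ensures that whenever $T^n x \in B_j$ the orbit point does lie in $\calM$, so the intersection with $\calM$ is what is being tested. The countability of the basis exists because $\calM$, being a closed subspace of the separable Hilbert space $\Hil$, is itself separable and hence second countable in the relative topology.
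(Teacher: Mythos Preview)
Your proof is correct and follows essentially the same approach as the paper: both arguments unwind the set-theoretic expression to the condition ``for every $j$ there exists $n$ with $T^n x \in B_j$'' and identify this, via the basis property, with density of $\Orb(T,x)\cap\calM$ in $\calM$. Your version is simply more explicit about the routine details (why testing on basis sets suffices, why $B_j\subseteq\calM$ matters, and why a countable basis exists).
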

\begin{proof}
We have that $x \in \bigcap_{j=1}^{\infty}
\bigcup_{n=0}^{\infty} T^{-n}(B_j)$ if and only if, for any 
$j\in \N$, there exists a number $n \in \N_0$ such that $T^nx \in B_j$. But since
$\{B_j\}$ is a basis for the relative topology of $\calM$, this occurs if
and only if $\Orb(T,x)\cap \calM$ is dense in $\calM$ or,
equivalently, if $x \in HC(T,\calM)$.
\end{proof}

Hence, if the set in the display above is nonempty, $T$ is
subspace-hypercyclic for $\calM$. Our following lemma will obtain much
more than what is needed  to imply the nonemptiness of said set. The following
definition will be convenient.

\begin{definition}\label{def:transitivity}
Let $T \in \BH$ and let $\calM$ be a nonzero subspace of $\Hil$. We say that
$T$ is {\em subspace-transitive with respect to $\calM$} if for all
nonempty sets $U \subseteq \calM$ and $V\subseteq \calM$, both relatively
open, there exists $n \in \N_0$ such that $T^{-n}(U) \cap V$ contains a
relatively open nonempty subset of $\calM$.
\end{definition}

\subsection*{Note added:} The referee has kindly pointed out the
following equivalences for our definition above, which greatly
simplify Lemma~\ref{le:dense} and Theorem~\ref{th:kitai} below.

\begin{theorem}\label{th:equiv}
Let $T\in \BH$ and let $\calM$ be a nonzero subspace of $\Hil$. Then
the following conditions are equivalent:
\begin{enumerate}
\item The operator $T$ is subspace-transitive with respect to $\calM$.
\item For any nonempty sets $U \subseteq \calM$ and $V\subseteq
  \calM$, both relatively open, there exists $n \in \N_0$ such that
  $T^{-n}(U) \cap V$ is a relatively open nonempty subset of $\calM$. 
\item  For any nonempty sets $U \subseteq \calM$ and $V\subseteq
  \calM$, both relatively open, there exists $n \in \N_0$ such that
  $T^{-n}(U) \cap V$ is nonempty and $T^n(\calM) \subseteq \calM$. 
\end{enumerate}
\end{theorem}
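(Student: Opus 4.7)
The plan is to establish the cycle (iii)$\Rightarrow$(ii)$\Rightarrow$(i)$\Rightarrow$(iii). The first two implications are essentially formal; the only nontrivial content lies in (i)$\Rightarrow$(iii), where one must extract the invariance condition $T^n(\calM)\subseteq\calM$ from the mere existence of a nonempty relatively open subset inside $T^{-n}(U)\cap V$.

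For (iii)$\Rightarrow$(ii), I would argue that if $T^n(\calM)\subseteq\calM$ then the restriction $T^n\restricted_\calM$ is a continuous map from $\calM$ into $\calM$. Hence $(T^n\restricted_\calM)^{-1}(U)$ is relatively open in $\calM$ for any relatively open $U\subseteq\calM$, and since $T^{-n}(U)\cap V=(T^n\restricted_\calM)^{-1}(U)\cap V$ for $V\subseteq\calM$, this intersection is relatively open in $\calM$; by hypothesis it is nonempty. The implication (ii)$\Rightarrow$(i) is immediate, since any nonempty relatively open subset of $\calM$ trivially contains a nonempty relatively open subset of $\calM$.

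The main step is (i)$\Rightarrow$(iii). Given nonempty relatively open $U,V\subseteq\calM$, apply (i) to produce an $n\in\N_0$ and a nonempty relatively open $W\subseteq\calM$ with $W\subseteq T^{-n}(U)\cap V$. In particular $T^n(W)\subseteq U\subseteq\calM$. Pick any $w_0\in W$; since $W$ is relatively open in $\calM$, there exists $r>0$ with $\{y\in\calM:\|y-w_0\|<r\}\subseteq W$. For an arbitrary $x\in\calM$, the vector $w_0+\epsilon x$ lies in $\calM$ (as $\calM$ is a subspace) and, for sufficiently small $\epsilon>0$, also in $W$. Then both $T^n(w_0+\epsilon x)$ and $T^n(w_0)$ lie in $\calM$, and by linearity
\[
\epsilon\, T^n(x)=T^n(w_0+\epsilon x)-T^n(w_0)\in\calM,
\]
so $T^n(x)\in\calM$. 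Since $x\in\calM$ was arbitrary, $T^n(\calM)\subseteq\calM$, and the nonemptiness $T^{-n}(U)\cap V\supseteq W\neq\emptyset$ is free from (i). This establishes (iii) with the same $n$.

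The only place requiring any real thought is the passage from ``contains a relatively open set'' to ``the whole subspace is invariant under $T^n$''; the trick is the standard linearity-plus-openness argument, exploiting the fact that a nonempty relatively open subset of the subspace $\calM$ contains a full open ball in $\calM$, which in turn spans $\calM$ after translation by $-w_0$ and scaling.
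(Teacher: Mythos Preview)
Your proof is correct and follows essentially the same route as the paper: the same cycle (iii)$\Rightarrow$(ii)$\Rightarrow$(i)$\Rightarrow$(iii), with (iii)$\Rightarrow$(ii) via continuity of $T^n\restricted_\calM$, (ii)$\Rightarrow$(i) trivial, and (i)$\Rightarrow$(iii) via the linearity-plus-openness trick of perturbing a point $w_0$ in the relatively open set $W$ by a small multiple of an arbitrary $x\in\calM$. The paper's argument is identical up to notation (the paper writes $x_0$ and $r$ where you write $w_0$ and $\epsilon$).
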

\begin{proof}
The implication (ii) $\implies$ (i) is obvious. The impĺication (iii)
$\implies$ (ii) is obvious once one observes that the operator
$T^n \restricted_{\calM} : \calM \to \calM$ is continuous and hence
$T^{-n}(U)$ is relatively open in $\calM$ if $U$ is relatively open in
$\calM$.

We will show that (i) $\implies$ (iii). Let $T$ be subspace-transitive
with respect to $\calM$ and let $U$ and $V$ be nonempty relatively
open subsets of $\calM$. By Definition \ref{def:transitivity} above,
it follows that there exists $n\in \N_0$ such that  $T^{-n}(U) \cap V$
contains a relatively open nonempty set, say $W$. Thus, in particular,
$T^{-n}(U) \cap V$ is nonempty. Now, let $x \in \calM$. Since $W
\subseteq T^{-n}(U)$, it follows that $T^n(W) \subseteq \calM$. Take
$x_0 \in W$. Since $W$ is relatively open and $x \in \calM$, for $r>0$
small enough, we have $x_0+r x \in W$, and hence $T^n(x_0+r x) =
T^n(x_0) + r T^n(x) \in \calM$. Since $T^n(x_0) \in \calM$,
subtracting it and dividing by $r$ leads to $T^n(x)\in \calM$,
showing $T^n(\calM) \subseteq \calM$ and finishing the proof.
\end{proof}

The following lemma will achieve ``half'' of the classical equivalence
of topological transitivity and hypercyclicity.

\begin{lemma}\label{le:dense}
Let $T \in \BH$ and let $\calM$ be a nonzero subspace of $\Hil$. Assume that
$T$ is subspace-transitive with respect to $\calM$. Then, 
$$
\bigcap_{j=1}^{\infty} \bigcup_{n=0}^{\infty} T^{-n}(B_j) \cap
\calM,
$$
is a dense subset of $\calM$. Here $\{ B_j \}$ is a countable open
basis for the (relative) topology of $\calM$.
\end{lemma}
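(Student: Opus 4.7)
The plan is to apply the Baire category theorem on the space $\calM$, which (as a closed subspace of a Hilbert space) is itself a complete metric space. For this I need to exhibit the set in question as (containing) a countable intersection of relatively open, relatively dense subsets of $\calM$. The crucial insight, made available by Theorem~\ref{th:equiv}, is that I can restrict the union over $n$ to indices for which $T^n$ leaves $\calM$ invariant; condition (iii) guarantees that enough such indices are available to recover density.

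First I would fix a countable relative basis $\{B_j\}$ for $\calM$ and define $N := \{n \in \N_0 : T^n(\calM) \subseteq \calM\}$. Note that $0 \in N$. For each $j$, set
\[
E_j := \bigcup_{n \in N} \bigl( T^{-n}(B_j) \cap \calM \bigr) \subseteq \bigcup_{n=0}^{\infty} T^{-n}(B_j) \cap \calM.
\]
The first step is openness of $E_j$ in $\calM$: for $n \in N$, the restriction $T^n \restricted_{\calM} : \calM \to \calM$ is a continuous self-map of $\calM$, so $T^{-n}(B_j) \cap \calM = (T^n\restricted_{\calM})^{-1}(B_j)$ is relatively open in $\calM$. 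Hence $E_j$, a union of relatively open sets, is relatively open.

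Next I would show each $E_j$ is relatively dense in $\calM$. Given any nonempty relatively open $V \subseteq \calM$, apply Theorem~\ref{th:equiv}(iii) with $U = B_j$ to produce $n \in \N_0$ with $T^{-n}(B_j) \cap V$ nonempty and $T^n(\calM) \subseteq \calM$ (i.e.\ $n \in N$). Since $V \subseteq \calM$, this gives $T^{-n}(B_j) \cap V \cap \calM \neq \emptyset$, so $E_j \cap V \neq \emptyset$. Finally, the Baire category theorem applied to the complete metric space $\calM$ yields that $\bigcap_{j=1}^{\infty} E_j$ is dense in $\calM$; since this is contained in $\bigcap_{j=1}^{\infty} \bigcup_{n=0}^{\infty} T^{-n}(B_j) \cap \calM$, the latter is dense as well.

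The step I expect to require the most care is openness: without the invariance information from Theorem~\ref{th:equiv}(iii), the set $T^{-n}(B_j) \cap \calM = \{x \in \calM : T^n x \in B_j\}$ is in general only the intersection of a relatively open set and the closed set $(T^n\restricted_{\calM})^{-1}(\calM)$, so its openness in $\calM$ is not automatic. This is precisely where the referee's equivalence (as acknowledged in the Note Added) simplifies the argument: restricting the union to $n \in N$ kills this obstruction without sacrificing density.
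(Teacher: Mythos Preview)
Your proof is correct and follows essentially the same approach as the paper's: both exhibit the target set as containing a countable intersection of relatively open, relatively dense subsets of $\calM$ via Theorem~\ref{th:equiv}, then invoke Baire. The only cosmetic difference is that the paper uses formulation~(ii) and builds $A_j = \bigcup_k T^{-n_{j,k}}(B_j)\cap B_k$ from one chosen $n_{j,k}$ per pair, whereas you use formulation~(iii) and take $E_j = \bigcup_{n\in N} T^{-n}(B_j)\cap\calM$; the underlying logic is identical.
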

\begin{proof}

By Theorem~\ref{th:equiv} above, for each $j$ and $k$, there exists
$n_{j,k} \in \N_0$ such that the set $T^{-n_{j,k}}(B_{j}) \cap B_{k}$
is nonempty and relatively open. Hence, the set
$$
A_j:=\bigcup_{k=1}^\infty T^{-n_{j,k}}(B_{j}) \cap B_{k}
$$
is relatively open. Furthermore, each $A_j$ is dense, since it
intersects each $B_k$. By the Baire Category Theorem, this implies
that
$$
\bigcap_{j=1}^\infty A_j=\bigcap_{j=1}^\infty \bigcup_{k=1}^\infty T^{-n_{j,k}}(B_{j}) \cap B_{k} 
$$
is a dense set. But clearly,
$$
\bigcap_{j=1}^\infty \bigcup_{k=1}^\infty T^{-n_{j,k}}(B_{j}) \cap
B_{k} \subseteq \bigcap_{j=1}^\infty \bigcup_{n=1}^\infty
T^{-n}(B_{j}) \cap \calM,
$$
and the result follows.
\end{proof}

The above lemmas clearly combine to imply the following theorem.

\begin{theorem}\label{th:tran_hyper}
Let $T \in \BH$ and let $\calM$ be a nonzero subspace of $\Hil$. If
$T$ is subspace-transitive for $\calM$ then $T$ is
subspace-hypercyclic for $\calM$.
\end{theorem}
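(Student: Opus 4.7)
The plan is to combine the two preceding lemmas directly; this theorem is essentially a bookkeeping consequence of them. First, I would recall that the unnamed lemma before Definition~\ref{def:transitivity} gives the identity
$$
\HC(T,\calM) = \bigcap_{j=1}^{\infty} \bigcup_{n=0}^{\infty} T^{-n}(B_j),
$$
where $\{B_j\}$ is a countable basis for the relative topology of $\calM$. Thus, to conclude $\HC(T,\calM) \neq \emptyset$, it suffices to exhibit a single point in the right-hand side.

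Next I would apply Lemma~\ref{le:dense}, which, under the hypothesis that $T$ is subspace-transitive with respect to $\calM$, guarantees that
$$
\bigcap_{j=1}^{\infty} \bigcup_{n=0}^{\infty} T^{-n}(B_j) \cap \calM
$$
is a dense subset of $\calM$. Since $\calM$ is a nonzero subspace, this set is in particular nonempty, so we may pick some $x$ belonging to it. Then $x \in \calM$ and $x \in \bigcap_{j=1}^{\infty} \bigcup_{n=0}^{\infty} T^{-n}(B_j)$, which by the first lemma means $x \in \HC(T,\calM)$. Therefore $T$ is subspace-hypercyclic for $\calM$, with $x$ as a subspace-hypercyclic vector.

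There is no real obstacle here: the serious work was already carried out in Theorem~\ref{th:equiv} (extracting from subspace-transitivity the open-set property that allows a Baire-category argument) and in Lemma~\ref{le:dense} (running the Baire-category argument itself). The present theorem is merely the observation that density implies nonemptiness, which, together with the characterization of $\HC(T,\calM)$ as a $G_\delta$-like intersection, delivers a subspace-hypercyclic vector. In fact, the proof also shows the stronger statement that the set of subspace-hypercyclic vectors lying in $\calM$ is dense in $\calM$, a fact worth noting even though it is not part of the formal statement.
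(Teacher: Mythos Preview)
Your proposal is correct and follows exactly the approach the paper intends: the theorem is stated there as an immediate consequence of the characterization $\HC(T,\calM) = \bigcap_{j}\bigcup_{n} T^{-n}(B_j)$ together with Lemma~\ref{le:dense}, and your write-up simply makes this combination explicit. Your added remark that the subspace-hypercyclic vectors in $\calM$ are in fact dense in $\calM$ is also correct and a nice observation.
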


We will show (see the comment after Example~\ref{ex:halmos} below) that the converse of the
above theorem is not true.

The following theorem is a subspace-hypercyclicity criterion, stated
in the style of \cite{GE2}

\begin{theorem}\label{th:kitai}
Let $T\in \BH$ and let $\calM$ be a nonzero subspace of $\Hil$. Assume
there exist $X$ and $Y$, dense subsets of $\calM$, and an increasing
sequence of positive integers $\{n _k \}$ such that
\begin{enumerate}
\item $T^{n_k} x \to 0$ for all $x \in X$, 
\item for each $y \in Y$, there exist a sequence $\{ x_k \}$ in $\calM$
  such that
$$
x_k \to 0 \quad \hbox{ and } \quad T^{n_k} x_k \to y,
$$
\item $\calM$ is an invariant subspace for $T^{n_k}$ for all $k \in \N$.
\end{enumerate}
Then $T$ is subspace-transitive with respect to $\calM$ and hence $T$
is subspace-hypercyclic for $\calM$.
\end{theorem}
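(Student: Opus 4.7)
The plan is to verify condition (iii) of Theorem~\ref{th:equiv} for some $n = n_k$, from which subspace-transitivity follows, and then subspace-hypercyclicity follows by Theorem~\ref{th:tran_hyper}. Fix nonempty relatively open sets $U, V \subseteq \calM$. Since $X$ and $Y$ are dense in $\calM$, we may pick $x \in X \cap V$ and $y \in Y \cap U$. Hypothesis (iii) immediately gives $T^{n_k}(\calM) \subseteq \calM$ for every $k$, so the ``invariance part'' of Theorem~\ref{th:equiv}(iii) is free; only the nonemptiness of $T^{-n_k}(U) \cap V$ needs to be established for some $k$.

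For that I would use the classical Kitai-style trick: by (ii) there is a sequence $\{x_k\} \subseteq \calM$ with $x_k \to 0$ and $T^{n_k} x_k \to y$. Set $z_k := x + x_k \in \calM$. Since $x \in V$, $V$ is relatively open in $\calM$, and $x_k \to 0$, we have $z_k \in V$ for all sufficiently large $k$. On the other hand, using (i) on $x \in X$ together with the convergence in (ii),
\[
T^{n_k} z_k \;=\; T^{n_k} x + T^{n_k} x_k \;\longrightarrow\; 0 + y \;=\; y.
\]
Because $y \in U$ and $U$ is relatively open in $\calM$, it follows that $T^{n_k} z_k \in U$ for all sufficiently large $k$. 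Picking one such $k$, we obtain $z_k \in V \cap T^{-n_k}(U)$, so this intersection is nonempty.

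Combined with $T^{n_k}(\calM) \subseteq \calM$ from (iii), condition~(iii) of Theorem~\ref{th:equiv} holds at $n = n_k$, so $T$ is subspace-transitive with respect to $\calM$, and Theorem~\ref{th:tran_hyper} then yields subspace-hypercyclicity. There is really no serious obstacle here; the only point that would have required work in the original formulation of subspace-transitivity is that $T^{-n_k}(U) \cap V$ be \emph{relatively open}, but this is precisely what the referee's equivalence Theorem~\ref{th:equiv} absorbs into the invariance clause (iii), which is in turn supplied by our hypothesis (iii). So the proof reduces to the standard Kitai argument, with hypothesis (iii) playing the role that continuity of $T^{n}\restricted_{\calM}$ plays in the full-space setting.
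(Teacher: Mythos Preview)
Your proof is correct and follows essentially the same approach as the paper: pick $x\in X\cap V$, $y\in Y\cap U$, use hypothesis (ii) to get $x_k$, and show $x+x_k\in T^{-n_k}(U)\cap V$ for large $k$, then invoke Theorem~\ref{th:equiv}(iii). The only cosmetic difference is that the paper phrases the argument with explicit $\varepsilon$-balls and makes explicit that $T^{n_k}(v+x_k)\in\calM$ (via hypothesis (iii)) before concluding membership in the relatively open set $U$; you use this invariance implicitly, which is fine since you have already noted it.
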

\begin{proof}
Let $U$ and $V$ be nonempty relatively open subsets of $\calM$. We
will show that there exists $k \in \N_0$ such that $T^{-n_k}(U) \cap V$
is nonempty. By Theorem~\ref{th:equiv}, since $T^{n_k}(\calM)
\subseteq \calM$, it will follow that $T$ is subspace-transitive with
respect to $\calM$.

Since $X$ and $Y$ are dense in $\calM$, there exists $v \in X \cap V$
and $u \in Y \cap U$. Furthermore, since $U$ and $V$ are relatively
open, there exists $\varepsilon >0$ such that the $\calM$-ball centered at $v$
of radius $\varepsilon$ is contained in $V$ and the  $\calM$-ball
centered at $u$ of radius $\varepsilon$ is contained in $U$.

By hypothesis, given these $v \in X$ and $u \in Y$, one can choose $k$ large
enough such that there exists $x_k \in \calM$ with
$$
\| T^{n_k} v \| < \frac{\varepsilon}{2}, \quad \| x_k  \| <
\varepsilon, \quad \hbox{ and } \quad \| T^{n_k} x_k - u  \| <
\frac{\varepsilon}{2}.
$$

We have:
\begin{itemize}

\item $v + x_k \in V$. Indeed, since $v \in \calM$ and $x_k
\in \calM$, it follows that $v+x_k \in \calM$. Also, since 
$$
\| (v + x_k) - v \| = \| x_k \| < \varepsilon,
$$
it follows that $v + x_k$ is in the $\calM$-ball centered at $v$ of
radius $\varepsilon$ and hence $v + x_k \in V$.

\item $T^{n_k}( v + x_k) \in U$. Indeed, since $v$ and $x_k$ are  in $\calM$
  and $\calM$ is invariant under $T^{n_k}$, it follows that $T^{n_k}( v + x_k) \in
\calM$. Also,
$$
\|T^{n_k} (v + x_k) - u \| \leq \| T^{n_k} v \| + \| T^{n_k} x_k -u \|
< \frac{\varepsilon}{2} + \frac{\varepsilon}{2} = \varepsilon,
$$
and hence $T^{n_k} (v + x_k)$ is in the $\calM$-ball centered at $u$ of
radius $\varepsilon$ and thus $T^{n_k} (v + x_k) \in U$.
\end{itemize}
The two facts above imply that $v + x_k \in T^{-n_k} (U) \cap V$ and
hence this set is nonempty. 
\end{proof}

A natural question to ask is whether condition (iii) in the theorem above is really necessary. After a preliminary version of our paper was distributed, Le \cite{le} proved an alternative Subspace-Hypercyclicity Criterion with a condition weaker than condition (iii) above. Evenmore, Le shows that Theorem~\ref{th:kitai} is false if only conditions (i) and (ii) hold.

The referee has pointed out to us that, with the conditions of Theorem~\ref{th:kitai}, the
sequence of operators $T^{n_k}\!\!\restricted_\calM : \calM \to \calM$ is
universal, as follows from (for example) \cite[Theorem 2]{GE1}, and thus
$T$ is subspace-hypercyclic for $\calM$. Note that Theorem
\ref{th:kitai} is stronger because it shows that $T$ is, in fact,
subspace-transitive for $\calM$.

In general, let $T:\Hil \to \Hil$ be an operator for which there exists an
increasing sequence $\{n_k\}$ of natural numbers such that
$T^{n_k}(\calM) \subseteq \calM$. If the sequence
$T^{n_k}\!\!\restricted_ \calM : \calM \to \calM$ is universal, it
follows that $T$ is subspace-hypercyclic for $\calM$. Contrast this
with Example \ref{ex:halmos} which will show that an operator $T$ can
be subspace-hypercyclic for a subspace $\calM$ not invariant for any
power of the operator.

The following is our first example of a subspace-hypercyclic operator
for a subspace $\calM$ such that $\calM$ is not invariant for the
operator. Recall that the {\em forward shift} $S$ on $\ell^2$  is the
operator defined by
$$
S(x_0,x_1, x_2, x_3, \ldots):= (0, x_0, x_1, x_2, \ldots).
$$
Clearly $B S$ equals the identity on $\ell^2$. Observe also that $S$
is an isometry.

\begin{example}\label{ex:kitai}
Let $\lambda \in \C$ be of modulus greater than $1$ and consider
$T:=\lambda B$ where $B$ is the backward shift on $\ell^2$. Let
$\calM$ be the subspace of $\ell^2$ consisting of all sequences with
zeroes on the even entries; that is, 
$$
\calM:=\left\{  \{ a_n \}_{n=0}^{\infty} \in \ell^2 \, : \, a_{2k}=0 \hbox{ for all }
k \right\}.
$$
Then $T$ is subspace-hypercylic for $\calM$.
\end{example}
\begin{proof}
One can see that $T^2$ on $\calM$ behaves like the hypercyclic
operator $\lambda^2 B$ on $\ell^2$ and hence $T$ is
subspace-hypercyclic for $\calM$.

We will apply Theorem \ref{th:kitai} to give an alternative proof. Let
$X=Y$ be the subset of $\calM$ consisting of all finite sequences;
i.e., those sequences that only have a finite number of nonzero
entries: this clearly is a dense subset of $\calM$. Let $n_k:= 2
k$. Let us check that conditions (i), (ii) and (iii) hold.

Let $x \in X$. Since $x$ only has finitely many nonzero
entries, $T^{2k}x$ will be zero eventually for $k$ large enough. Thus (i) holds.

Let $y \in Y$ and define $x_k:= \frac{1}{\lambda^{2k}} S^{2k}
y$, where $S$ is the forward shift on $\ell^2$. Each $x_k$ is
in $\calM$ since the even entries of $y$ are shifted by $S^{2k}$ into the even
entries of $x_k$.  We have
$$
\| x_k \|= \frac{1}{|\lambda|^{2k}} \| y \|,
$$
and thus it follows that $x_k \to 0$, since $|\lambda|>1$. Also, because
$$
T^{2k} x_k = (\lambda B)^{2k} x_k =  (\lambda B)^{2k}
\frac{1}{\lambda^{2k}} S^{2k} y = y,
$$
we have that condition (ii) holds. 

That condition (iii) holds follows from the fact that if a vector has
a zero entry on all even positions then it will also have a zero
entry on all even positions after the application of the backward
shift any even number of times. 

The subspace-hypercyclicity of $T$ now follows.
\end{proof}

As we commented before, the above is our first example of a
subspace-hypercyclic operator $T$ for which $\calM$ is not invariant under
$T$. Observe that, nevertheless, the subspace $\calM$ in the above example
is invariant for $T^2$.

The example above can be generalized. Let $a\in \N_0$ and $b\in \N$ be
some fixed numbers with $a < b$ and consider the subspace $\calM$ of $\ell^2$
consisting of all sequences with zeroes on the entries indexed by the
set $\{ a + k b \, : \, k \in \N_0\}$. Then, the argument above holds
for $X$ and $Y$ both equal to the set of all sequences in $\calM$ with
finitely many nonzero entries and $n_k:=b k$. Hence, for $|\lambda|>1$, the 
operator $\lambda B$ is subspace-hypercyclic for $\calM$. In this
case, the space $\calM$ is $T^b$ invariant but not $T$ invariant. 

As the referee so clearly expressed, all examples given so far are in
some sense trivial, ``because they contain some more or less hidden element
of hypercyclicity''. In Examples \ref{ex:sum} and \ref{ex:eq} the
subspace is invariant under the operator, and hence the operator
restricted to the subspace is hypercyclic. In Example \ref{ex:kitai}
the subspace is invariant under a power of the operator. In Theorem
\ref{th:kitai}, the subspace is invariant for the operators $T^{n_k}$.

The next example, although based on a hypercyclic operator, does not
fit into any of the categories above, since the subspace is not
invariant for any power of the operator.

\begin{example}\label{ex:halmos}
Let $\lambda \in \C$ be of modulus greater than $1$ and let $B$ be
the backward shift on $\ell^2$. Let $m \in \N$ and $\calM$ be the
subspace of $\ell^2$ consisting of all sequences with zeroes on the
first $m$ entries; that is, 
$$
\calM:=\left\{  \{ a_n \}_{n=0}^\infty \in \ell^2 \, : \, a_{n}=0
  \hbox{ for } n < m \right\}.
$$
Then $\lambda B$ is subspace-hypercylic for $\calM$.
\end{example}
\begin{proof}
The argument used here is really the same as the one Rolewicz \cite{rolewicz}
originally used to show hypercyclicity of $\lambda B$. We base our
proof on the expositions of Halmos \cite[p.~286]{Halmos} and of
Jim\'enez-Mungu\'ia \cite{JM}.

First of all, for a complex sequence $h=\{h_j\}_{j=0}^\infty$ with finitely-many
nonzero entries, we define its {\em length} as
$$
|h|:=\min\{ s \in \N_0 \, : \, h_k = 0 \hbox{ for all } k \geq s \}.
$$

We can choose a countable dense subset of $\calM$, called $\{ f_j \}$,
consisting of sequences which have at most a finite number of nonzero entries. 

Define a (necessarily increasing) sequence of integers $k_j$ inductively as
follows. Let $k_0=0$ and for each $j \in \N$ choose $k_j$ in such a way that 
\begin{equation}\label{eq:cond}
\frac{\| f_j  \|}{|\lambda|^{k_j-k_{j-1}}} \leq \frac{1}{|\lambda|^j},
\end{equation}
and such that $k_j > k_{j-1} + |f_{j-1}|$.

Define the vector $f$ by
$$
f:=\sum_{j=0}^\infty \frac{S^{k_j} f_j}{\lambda^{k_j}},
$$
where $S$ is the forward shift.

We must first show that $f \in \ell^2$. It follows from inequality
\eqref{eq:cond} that
$$
\left\|  \frac{S^{k_j} f_j}{\lambda^{k_j}} \right\| 
= \frac{\|f_j\|}{|\lambda|^{k_j}} 
\leq \frac{1}{|\lambda|^{j + k_{j-1}}} 
\leq \frac{1}{|\lambda|^j},
$$
and hence the infinite sum converges in norm to an $\ell^2$ vector.

Let $n \in \N$. Since for all $j$ the condition $k_j > k_{j-1} +
|f_{j-1}|$ holds, it follows that $k_n > k_j + | f_j|$ for all $j < n$
and hence
$$
(\lambda B)^{k_n} \frac{S^{k_j} f_j}{\lambda^{k_j}} = 0.
$$
This implies that $(\lambda B)^{k_n}f$ ``starts'' with the vector
$f_n$ and hence that $(\lambda B)^{k_n}f$ is in $\calM$. 

The condition  $k_j > k_{j-1} + |f_{j-1}|$ also implies that the norm
of the difference $(\lambda B)^{k_n}f - f_n$ is given by
\begin{eqnarray*}
\|  (\lambda B)^{k_n}f - f_n  \|^2 
&=& \sum_{j=n+1}^\infty \left\| \frac{S^{k_j-k_n} f_j }{\lambda^{k_j-k_n}} \right\|^2 \\
&=& \sum_{j=n+1}^\infty \left(\frac{\|f_j\| }{|\lambda|^{k_j-k_n}} \right)^2 \\
&\leq& \sum_{j=n+1}^\infty \left(\frac{\|f_j\| }{|\lambda|^{k_j-k_{j-1}}} \right)^2 \\
&\leq& \sum_{j=n+1}^\infty \frac{1}{|\lambda|^{2j}}.
\end{eqnarray*}

Let $h \in \calM$. Given $\epsilon >0$, choose $N$ such that $\| h -
f_N \| < \frac{\epsilon}{2}$ and such that 
$$
\left( \sum_{j=N+1}^\infty \frac{1}{|\lambda|^{2j}} \right)^{1/2} < \frac{\epsilon}{2}.
$$
It then follows that 
$$
\|(\lambda B)^{k_N}f - h \| < \epsilon,
$$
and hence that $\Orb(\lambda B, f) \cap \calM$ is dense in $\calM$.
\end{proof}

In the example above, it is clear that it is impossible to find an
increasing sequence of integers $n_k$ such that $\calM$ is invariant
for $T^{n_k}$ (since clearly, $\calM$ is not invariant for $T^n$ for
any $n$). Thus, condition (iii) in Theorem \ref{th:kitai} is
not necessary. 

Observe that the operator above does not satisfy
Theorem~\ref{th:equiv} and hence this implies that
subspace-hypercyclicity for a subspace $\calM$  does not imply
subspace-transitivity for $\calM$. After a preliminary version of this paper was distributed, another example of a 
subspace-hypercyclic operator for a subspace $\calM$ which is not 
subspace-transitive for $\calM$ was given in \cite{le}.

It should be noted that the procedure used in Example \ref{ex:halmos}
above, could have been used to find a subspace-hypercyclic vector for
the operator in Example \ref{ex:kitai}. 

Let us contrast the behaviour of the subspace-hypercyclic vector in
the above two examples. In Example \ref{ex:kitai}, the orbit of any
subspace-hypercyclic vector $x$ under $T$ goes in and out of the space
$\calM$ at regular intervals. In Example \ref{ex:halmos}, by
judiciously choosing the sequence of finite vectors $\{ f_j \}$ and
the sequence of natural numbers $\{ k_j \}$, the orbit of $f$ under
$T$ goes in and out of the space $\calM$ at irregular intervals;
namely, one can find arbitrarily long consecutive elements of the
orbit that stay inside the space and arbitrarily long consecutive
elements of the orbit that stay outside the space.

We do not know if the vector $f$ in Example~\ref{ex:halmos} is
hypercyclic. An easy way to obtain a subspace-hypercyclic
operator which is not hypercyclic and has the properties of
Example~\ref{ex:halmos} above, follows.

\begin{example}
Let $|\lambda| > 1$, and consider the operator $T:=(\lambda B) \oplus
I$ on $\Hil:=\ell^2 \oplus \ell^2$. Let $\calM$ be as in Example
\ref{ex:halmos} and let $f$ be a subspace-hypercyclic vector for
$\calM$. Define $\calN:=\calM \oplus \{ 0 \}$. Then $f
\oplus 0$ is a subspace-hypercyclic vector for $\calN$, but $f \oplus
0$ is not hypercyclic for $\Hil$. Also, $\calN$ is not an invariant
subspace for $T^k$ for any $k$.
\end{example}

\section{Finite Dimensions}

The following easy observation will be useful.

\begin{proposition}\label{prop:restriction}
Let $T\in \BH$ be subspace-hypercyclic for $\calM$. If $\calN$ is an
invariant subspace for $T$ and $\calM \subseteq \calN$, then
$T\restricted_{\calN} : \calN \to \calN$ is subspace-hypercyclic for $\calM$.
\end{proposition}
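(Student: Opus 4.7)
The plan is to exploit the observation made right after the main Definition, namely that a subspace-hypercyclic vector $x$ for $\calM$ can be taken to lie inside $\calM$ itself. Once this is justified, the conclusion is immediate: since $\calM \subseteq \calN$ gives $x \in \calN$, and $\calN$ is $T$-invariant, the orbit $\Orb(T|_\calN, x)$ is defined and coincides with $\Orb(T,x)$, so $\Orb(T|_\calN, x) \cap \calM$ is dense in $\calM$ and $x$ witnesses the subspace-hypercyclicity of $T|_\calN$ for $\calM$.

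Thus the only step that requires real work is verifying the ``we may assume $x \in \calM$'' remark. The idea is to replace $x$ by some iterate $T^{n_0}x$ that already lies in $\calM$. To get such an $n_0$, I would observe that if $\Orb(T,x) \cap \calM$ is dense in $\calM$, then for every nonempty relatively open $U \subseteq \calM$ the set $\{n \in \N_0 : T^n x \in U\}$ is infinite: otherwise, excising the finitely many offending points from $U$ would leave a smaller open subset of $\calM$ disjoint from $\Orb(T,x)$, contradicting density. In particular, for any $n_0 \in \N_0$, the tail $\{T^n x : n \geq n_0\} \cap \calM$ still meets every nonempty relatively open subset of $\calM$ and hence is dense in $\calM$. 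Choosing $n_0$ so that $T^{n_0} x \in \calM$ (which is possible since the orbit intersects $\calM$) and setting $y := T^{n_0} x$, the orbit $\Orb(T, y)$ is a tail of $\Orb(T,x)$ and therefore $\Orb(T,y) \cap \calM$ is dense in $\calM$, with $y \in \calM$.

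With $y$ in hand, $y \in \calM \subseteq \calN$, and the $T$-invariance of $\calN$ gives $T^n y \in \calN$ for every $n$, so $\Orb(T|_\calN, y) = \Orb(T,y)$. Intersecting with $\calM$ yields a dense subset of $\calM$, completing the proof. No real obstacle arises; the ``hard'' step is purely bookkeeping to move the hypercyclic vector inside $\calM$, which is essentially the standard argument for why a hypercyclic vector can be taken from any prescribed dense invariant target set.
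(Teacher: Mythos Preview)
Your proof is correct. The paper itself gives no proof of this proposition, labelling it an ``easy observation'' and leaving the details to the reader; your argument simply fills in what the paper takes for granted, namely the remark (made right after the main Definition) that the subspace-hypercyclic vector may be assumed to lie in $\calM$. Your tail-of-orbit justification of that remark is the standard one, and once $x\in\calM\subseteq\calN$ is secured the invariance of $\calN$ makes the conclusion immediate, exactly as you say.
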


We remind the reader of the following well known definitions.

\begin{definition}
Let $\calM$ and $\calN$ be subspaces of $\Hil$. If $\calM \cap \calN =
\{0 \}$ and $\calM + \calN = \Hil$ we say that $\calM$ and $\calN$ are
{\em complementary}. 
\end{definition}

\begin{definition}
Let $\calM$ and $\calN$ be complementary subspaces of $\Hil$. The {\em
  projection onto $\calM$ along $\calN$} is the function $P:\Hil \to
\Hil$ defined as
$$
P(x+y) = x,
$$
where $x\in \calM$ and $y \in \calN$.
\end{definition}

It can easily be shown that $P$ is a bounded operator. This allows us
to obtain the following theorem. An analogous result for hypercyclic
operators is due to Herrero~\cite{herrero}.

\begin{theorem}\label{theo:complementary}
Let $\calM$ and $\calN$ be complementary subspaces of $\Hil$ and let
$P$ be the projection onto $\calM$ along $\calN$. Let $T \in \BH$ and
suppose that $\calN$ is invariant under $T$. If $T$ is subspace-hypercyclic
for some $\calL \subseteq \calM$, then $P T\restricted_{\calM}$ is
subspace-hypercyclic for $\calL$.
\end{theorem}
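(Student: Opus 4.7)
The plan is to exhibit an explicit subspace-hypercyclic vector for $PT\restricted_\calM$ built directly from a subspace-hypercyclic vector for $T$. Specifically, let $x\in\Hil$ be such that $\Orb(T,x)\cap\calL$ is dense in $\calL$, and take $z:=Px\in\calM$ as the candidate. Since $PT\restricted_\calM$ is an operator on $\calM$, the vector $z$ lies in the correct space.

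The key step is the identity
$$
(PT\restricted_\calM)^n(Px) = P(T^n x) \qquad \text{for all } n\in\N_0.
$$
I would first establish the base case $P T(Pw) = P(Tw)$ for every $w\in\Hil$: writing $w = w_\calM + w_\calN$ with $w_\calM\in\calM$ and $w_\calN\in\calN$, we have $T w_\calN \in \calN$ by the invariance hypothesis, so $P(T w_\calN)=0$, and therefore $P(Tw) = P(T w_\calM) = P(T(Pw))$. The general case then follows by a routine induction, applying the base identity with $w$ replaced by $T^{n-1}x$ at each step.

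With the identity in hand, the conclusion is immediate. For any $n$ with $T^n x \in \calL$, we have $T^n x \in \calL \subseteq \calM$, so $P(T^n x) = T^n x$, and hence $(PT\restricted_\calM)^n(Px) = T^n x \in \calL$. Therefore
$$
\Orb(PT\restricted_\calM, Px) \cap \calL \;\supseteq\; \Orb(T,x)\cap\calL,
$$
and the right-hand side is dense in $\calL$ by hypothesis. Consequently $Px$ is a subspace-hypercyclic vector for $PT\restricted_\calM$ with respect to $\calL$.

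The only genuinely substantive point is the identity $(PT\restricted_\calM)^n(Px)=P(T^n x)$, which is essentially the assertion that the invariance of $\calN$ under $T$ makes $P$ intertwine $T$ and $PT\restricted_\calM$ in a way that is compatible with iteration. There is no real obstacle here—the argument is more a bookkeeping lemma about intertwining operators than a hard technical step—and once it is in place, the density of the relevant orbit is a one-line consequence of the definitions.
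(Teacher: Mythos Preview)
Your proof is correct and follows essentially the same route as the paper: both arguments hinge on the intertwining relation $PTP=PT$ (equivalently, your identity $PT(Pw)=P(Tw)$), iterate it to get $(PT\restricted_\calM)^n(Px)=P(T^n x)$, and then read off the density of the orbit from the hypothesis. The only cosmetic difference is that the paper assumes at the outset that $x\in\calL$ (so $Px=x$), whereas you work with a general $x$ and project it---both are fine.
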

\begin{proof}
Assume $T$ is subspace-hypercyclic for $\calL$ with
subspace-hypercyclic vector $x \in \calL$. Since $\Orb(T,x) \cap
\calL$ is dense in $\calL$, and $\calL \subseteq \calM$ it follows
that $P(\Orb(T,x)) \cap \calL$ is dense in $\calL$.

Also, since $\calN$ is an invariant subspace for $T$, it follows that
$PTP=PT$ and hence that $(PT)^k = P T^k$ for all $k \in \N$. Thus
$$
P ( \Orb(T,x) ) = \Orb(P T\restricted_{\calM},x).
$$
It follows that $\Orb(P T\restricted_{\calM},x) \cap  \calL$ is dense
in $\calL$, as desired.
\end{proof}

The following proposition was shown to us by A.~Peris.

\begin{theorem}\label{th:hyp_spec}
Let $T\in \BH$. If $T$ is subspace-hypercyclic for some subspace then
$\sigma(T) \cap S^1 \neq \emptyset$.
\end{theorem}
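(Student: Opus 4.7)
The plan is to prove the contrapositive: assume $\sigma(T)\cap S^1=\emptyset$ and show that no vector is subspace-hypercyclic for any nonzero subspace $\calM$. I partition $\sigma(T)=\sigma_1\sqcup\sigma_2$ with $\sigma_1\subseteq\{|z|<1\}$ and $\sigma_2\subseteq\{|z|>1\}$, and apply the Riesz functional calculus to thin annular neighborhoods of the two pieces. This produces commuting bounded projections $P_1,P_2$ with $P_1+P_2=I$ and $P_i T=T P_i$, so that $\Hil=\Hil_1\oplus\Hil_2$ as a topological direct sum with $\Hil_i:=\ran P_i$ closed, $T$-invariant, and $\sigma(T|_{\Hil_i})=\sigma_i$.

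Next I would extract the norm estimates via Gelfand's formula. Because $\sigma_1$ lies strictly inside the open unit disk and $\sigma_2$ strictly outside, $T|_{\Hil_1}$ has spectral radius less than $1$ and $T|_{\Hil_2}$ is invertible with inverse of spectral radius less than $1$; hence $\|T^n|_{\Hil_1}\|\to 0$ and $\|T^ny\|\to\infty$ for every nonzero $y\in\Hil_2$. For any $x$, set $x_i:=P_ix$. If $x_2=0$ then $T^nx=T^nx_1\to 0$, whereas if $x_2\ne 0$ then the bound $\|T^nx_2\|=\|P_2 T^nx\|\le\|P_2\|\cdot\|T^nx\|$ yields $\|T^nx\|\to\infty$. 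So every orbit either decays to $0$ or escapes to infinity in norm.

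Finally I would derive the contradiction with density of $\Orb(T,x)\cap\calM$ in the nonzero subspace $\calM$. Pick $m\in\calM\setminus\{0\}$ and $n_k\in\N_0$ with $T^{n_k}x\to m$. If $(n_k)$ is unbounded, passing to a subsequence gives $\|T^{n_k}x\|\to\|m\|\in(0,\infty)$, which is incompatible with both $T^nx\to 0$ and $\|T^nx\|\to\infty$. Otherwise $(n_k)$ is bounded, so $\{T^{n_k}x\}$ is a finite set converging to $m$ and is therefore eventually equal to $m$, giving $m\in\Orb(T,x)$. Requiring this for every $m\in\calM$ would force the uncountable set $\calM\setminus\{0\}$ into the countable orbit, which is impossible. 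I expect the only delicate point to be cleanly invoking the Riesz spectral decomposition and ensuring the projections are bounded; the remaining dichotomy between decay and escape is a short norm chase.
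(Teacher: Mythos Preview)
Your proof is correct and follows essentially the same route as the paper: assume $\sigma(T)\cap S^1=\emptyset$, apply the Riesz spectral decomposition to split $\Hil$ into invariant pieces where $T$ has spectral radius below $1$ and where $T^{-1}$ does, and conclude that every orbit either tends to $0$ or escapes to infinity in norm, which precludes density in any nonzero subspace. The only cosmetic differences are that the paper uses the reverse triangle inequality $\|T^nx\|\geq\|T^nx_2\|-\|T^nx_1\|$ where you use the projection bound $\|T^nx_2\|\leq\|P_2\|\,\|T^nx\|$, and the paper finishes with the simpler observation that a bounded (resp.\ escaping) orbit meets any subspace in a bounded (resp.\ finite-in-each-ball) set, whereas you route through a countability argument; both endings are fine.
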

\begin{proof}
Assume the intersection is empty. Then, there exist (possibly empty) sets $K_1$ and $K_2$
such that $\sigma(T)=K_1 \cup K_2$ with $K_1 \subseteq \D$ and $K_2
\subseteq \conj{\D}^c$. By the Riesz Decomposition Theorem
\cite[Theorem 2.10]{RR}, there exist complementary invariant subspaces
$\calM_1$ and $\calM_2$ such that
$$
\sigma\left(T\restricted_{\calM_1}\right) \subseteq K_1 \quad \hbox{
  and } \quad \sigma\left(T\restricted_{\calM_2}\right) \subseteq K_2.
$$
Let $x \in \Hil$. Then, there exist $x_1 \in \calM_1$ and $x_2 \in
\calM_2$ such that $x=x_1+x_2$. If $x_2$ was equal to zero, then
$$
T^n x = T^n x_1= \left( T\restricted_{\calM_1} \right)^n x_1
$$
which converges to zero because
$\sigma\left(T\restricted_{\calM_1}\right) \subseteq \D$. Thus,
$\Orb(T,x)$ is bounded and hence its intersection with any subspace
cannot be dense in that subspace.

Assume $x_2$ is not equal to zero. We have
$$
\| T^n x \| = \| T^n x_1 + T^n x_2 \| \geq \| T^n x_2 \| - \| T^n x_1 \|,
$$
and as before, $\| T^n x_1 \|$ goes to zero. Since
$\sigma\left(T\restricted_{\calM_2}\right) \subseteq \conj{\D}^c$, it
follows that $\| T^n x_2 \|$ goes to infinity and hence $\| T^n x \|$
goes to infinity. Thus only finitely many elements of $\Orb(T,x)$
intersect any bounded set, and hence $\Orb(T,x)$ intersected with a
subspace cannot be dense in that subspace.
\end{proof}

As we did in Example \ref{ex:sum}, it can easily be shown that the
operator $(2 B) \oplus (3I): \ell^2 \oplus \ell^2 \to \ell^2 \oplus
\ell^2$ is subspace hypercyclic for $\calM:=\ell^2 \oplus \0$ with
subspace hypercyclic vector $f \oplus 0$, where $f$ is a hypercyclic
vector for $2B$. Observe that $\sigma((2 B) \oplus (3I))$ is the
closed disk of radius $2$ union the singleton $\{3\}$. Thus, it is not
true that every component of the spectrum must intersect the unit
circle for a subspace-hypercyclic operator, as is the case for
classical hypercyclicity.

Is there any spectral restriction besides the one given by Theorem
\ref{th:hyp_spec}? Namely, given an arbitrary nonempty compact subset
$K$ of $\C$ which intersects $S^1$, is there a subspace-hypercyclic
operator with that set as its spectrum?

Yes. Let $K_0$ be a component of $K$ which intersects
$S^1$. By a theorem of Shkarin~\cite{shkarin}, one can construct a
hypercyclic operator $T$ with spectrum $K_0$. Take the direct sum of
$T$ with an operator whose spectrum is the closure of $K\setminus K_0$
to obtain the desired operator. This proof was communicated to us
independently by A.~Peris and by the referee.

\begin{proposition}\label{prop:kernel}
Let $T \in \BH$ be subspace-hypercyclic for $\calM$. Then $\ker(T^* -
\lambda) \subseteq \calM^\perp$ for all $\lambda \in \C$.
\end{proposition}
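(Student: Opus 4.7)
The plan is to contradict density by pairing the orbit against the eigenvector $y$: the inner products $\langle T^n x, y\rangle$ are forced to follow a rigid geometric pattern (since $y$ is an eigenvector of $T^*$), and no such pattern can be dense in $\C$.

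Let $y\in\ker(T^*-\lambda)$ and suppose for contradiction that $y\notin\calM^\perp$. Then the linear functional $\varphi:\calM\to\C$ defined by $\varphi(m):=\langle m,y\rangle$ is nonzero on $\calM$, hence (since $\calM$ is a $\C$-subspace and $\C$ is one-dimensional) surjective: $\varphi(\calM)=\C$. Let $x\in\Hil$ be a subspace-hypercyclic vector for $\calM$, so that $\Orb(T,x)\cap\calM$ is dense in $\calM$. By continuity of $\varphi$,
\[
\varphi\bigl(\Orb(T,x)\cap\calM\bigr)\ \text{is dense in}\ \varphi(\calM)=\C.
\]

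Next I would use that $y$ is an eigenvector for $T^*$ to rigidify the values of $\varphi$ on the orbit. For every $n\in\N_0$,
\[
\varphi(T^n x)=\langle T^n x,y\rangle=\langle x,(T^*)^n y\rangle=\overline{\lambda}^{\,n}\langle x,y\rangle.
\]
Hence
\[
\varphi\bigl(\Orb(T,x)\cap\calM\bigr)\subseteq \bigl\{\overline{\lambda}^{\,n}\langle x,y\rangle : n\in\N_0\bigr\}=:S.
\]

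The final step is to observe that $S$ is never dense in $\C$. If $\langle x,y\rangle=0$ then $S=\{0\}$. If $\langle x,y\rangle\neq0$, split cases on $|\lambda|$: for $|\lambda|<1$ the set $S$ is bounded by $|\langle x,y\rangle|$, so it misses a ball around any $z$ with $|z|>|\langle x,y\rangle|$; for $|\lambda|=1$ the set $S$ lies on the circle of radius $|\langle x,y\rangle|$, hence misses a neighborhood of $0$; for $|\lambda|=0$ the set $S=\{\langle x,y\rangle,0\}$ is finite; for $|\lambda|>1$ the moduli $|\lambda|^n|\langle x,y\rangle|$ are bounded below by $|\langle x,y\rangle|>0$ and tend to infinity, so $S$ is discrete in $\C\setminus\{0\}$ and stays a positive distance from $0$. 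In every case $S$ is not dense in $\C$, contradicting that $\varphi(\Orb(T,x)\cap\calM)\subseteq S$ is dense in $\C$. Therefore $y\in\calM^\perp$.

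The only mildly delicate point is Step 4 (the case analysis on $|\lambda|$), but each case is immediate once one realizes that the moduli $|\lambda|^n|\langle x,y\rangle|$ form a geometric sequence that is either bounded above, bounded below by a positive constant, or constant, ruling out density in $\C$.
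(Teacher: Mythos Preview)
Your proof is correct and follows essentially the same route as the paper: define the functional $\varphi(m)=\langle m,y\rangle$ on $\calM$, use the eigenvector relation $T^*y=\lambda y$ to reduce $\varphi(\Orb(T,x)\cap\calM)$ to a subset of $\{\overline{\lambda}^{\,n}\langle x,y\rangle:n\in\N_0\}$, and observe this set cannot be dense in $\C$. The only difference is that the paper simply asserts this last set is ``clearly not dense in $\C$'', whereas you spell out the case analysis on $|\lambda|$; your added detail is fine (the $|\lambda|=0$ case is redundant with $|\lambda|<1$, but that does no harm).
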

\begin{proof}
Assume that $\Orb(T,x) \cap \calM$ is dense in $\calM$. Fix $\lambda
\in \C$ and let $y$ be in $\ker (T^*-\lambda)$.  Let $\varphi
: \calM \to \C$ be the functional defined by $\varphi(x) := \left<
  x, y \right>$. Clearly $\varphi$ is surjective if and only if $y
\notin \calM^\perp$.

Observe that
$$
\left< T^n x, y \right> = \left< x, T^{*n} y \right> =  \left< x,
  \lambda^n y \right> = \conj{\lambda}^n \left<x, y \right>,
$$
and hence
\begin{equation}\label{eq:phi_orb}
\varphi(\Orb(T,x) \cap \calM) = \left\{ \conj{\lambda}^n \left< x, y
  \right> \, : \, \hbox{ there exists $n$ such that } T^n x \in  \calM
\right\}.
\end{equation}
But, if $\Orb(T,x) \cap \calM$ is dense in $\calM$, then
$\varphi(\Orb(T,x) \cap \calM)$ must be dense in $\C$, unless
$\varphi$ is not surjective. Since the set in equation
\eqref{eq:phi_orb} is clearly not dense in $\C$, it follows that
$\varphi$ is not surjective and hence $y \in \calM^\perp$.
\end{proof}

The result above can be generalized. We will need the following fact.

\begin{lemma}\label{lemma:formula}
Let $T \in \BH$ and $\lambda \in \C$. If $(T-\lambda)^p y = 0$ for
some $p \in \N$, then for $n \geq p$
$$
T^n y = \sum_{k=1}^p \binom{p}{k} \binom{n}{p} \frac{k}{n-p+k}
(-1)^{k-1} \lambda^{n-p+k} \,  T^{p-k} y.
$$
\end{lemma}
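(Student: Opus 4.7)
The plan is to write $T = \lambda I + N$ where $N := T - \lambda$, and exploit the fact that $\lambda I$ and $N$ commute so we can apply the binomial theorem. Since $N^p y = 0$ by hypothesis, and more generally $N^j y = 0$ for all $j \geq p$, the expansion
$$
T^n y = (\lambda I + N)^n y = \sum_{j=0}^{n} \binom{n}{j}\lambda^{n-j} N^j y
$$
truncates to
$$
T^n y = \sum_{j=0}^{p-1} \binom{n}{j}\lambda^{n-j}(T-\lambda)^j y,
$$
valid for all $n \geq p$ (and in fact for all $n \geq 0$, interpreting binomial coefficients appropriately).

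Next I would expand each $(T-\lambda)^j$ by the binomial theorem and swap the order of summation to collect coefficients of each $T^i y$ with $i = 0, 1, \ldots, p-1$. This yields
$$
T^n y = \sum_{i=0}^{p-1} \lambda^{n-i}\left(\sum_{j=i}^{p-1} \binom{n}{j}\binom{j}{i}(-1)^{j-i}\right) T^i y.
$$
Reindexing via $k = p - i$ (so $T^i = T^{p-k}$ and $\lambda^{n-i} = \lambda^{n-p+k}$), the formula in the statement will follow once the inner sum is shown to equal $\binom{p}{k}\binom{n}{p}\frac{k}{n-p+k}(-1)^{k-1}$.

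The real work is this binomial identity. I would use the standard trinomial-revision rule $\binom{n}{j}\binom{j}{i} = \binom{n}{i}\binom{n-i}{j-i}$ to rewrite the inner sum as
$$
\binom{n}{i}\sum_{m=0}^{p-1-i}(-1)^m\binom{n-i}{m},
$$
and then apply the classical alternating-sum identity $\sum_{m=0}^{K}(-1)^m\binom{N}{m} = (-1)^K \binom{N-1}{K}$ (which is proved by a one-line telescoping argument using Pascal's rule). This gives $\binom{n}{i}(-1)^{p-1-i}\binom{n-i-1}{p-1-i}$, and a direct factorial computation shows
$$
\binom{n}{i}\binom{n-i-1}{p-1-i} = \binom{p}{p-i}\binom{n}{p}\frac{p-i}{n-i},
$$
which matches the claimed coefficient after the reindexing $k = p - i$.

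The main obstacle is purely bookkeeping: keeping track of the reindexing $k \leftrightarrow p-i$ and the various binomial identities without sign or index errors. There is no conceptual difficulty once the expansion $T = \lambda I + N$ is used; all that is needed is the nilpotency $N^p y = 0$, the binomial theorem for commuting operators, and two elementary binomial identities.
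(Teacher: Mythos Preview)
Your argument is correct. The paper itself gives no details: its entire proof reads ``This follows by a straightforward (but tedious) induction argument.'' So the comparison is between an unspecified induction on $n$ and your direct computation.

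Your route is genuinely different and, in a sense, more informative. Writing $T = \lambda I + N$ with $N^p y = 0$ and truncating the binomial expansion explains \emph{why} such a closed formula exists in the first place, rather than merely verifying a formula that has been handed to you. The two binomial identities you invoke (trinomial revision $\binom{n}{j}\binom{j}{i} = \binom{n}{i}\binom{n-i}{j-i}$ and the alternating partial sum $\sum_{m=0}^{K}(-1)^m\binom{N}{m} = (-1)^K\binom{N-1}{K}$) are standard, and your factorial check of the final equality is correct; the reindexing $k = p-i$ then matches the statement exactly, including the sign $(-1)^{k-1}$ and the factor $k/(n-p+k)$. An induction proof, by contrast, would require one to already possess the formula and then push through a Pascal-type recursion on the coefficients---workable but opaque. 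Your approach also makes immediately visible the fact actually used downstream in the paper (Proposition~\ref{prop:gen_kernel}), namely that $\langle T^n x, y\rangle = \overline{\lambda}^{\,n-p} Q(n)$ for a polynomial $Q$ of degree at most $p-1$: this is transparent from the truncated expansion $T^n y = \sum_{j=0}^{p-1}\binom{n}{j}\lambda^{n-j}(T-\lambda)^j y$ before any further rewriting.
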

\begin{proof}
This follows by a straightforward (but tedious) induction argument.
\end{proof}

The case $p=1$ of the following proposition was shown in Proposition \ref{prop:kernel} above.

\begin{proposition}\label{prop:gen_kernel}
Let $T \in \BH$ be subspace-hypercyclic for $\calM$. Then $\ker(T^* -
\lambda)^p \subseteq \calM^\perp$ for all $\lambda \in \C$ and all $p
\in \N$.
\end{proposition}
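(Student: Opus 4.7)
The plan is to imitate Proposition \ref{prop:kernel}, with Lemma \ref{lemma:formula} absorbing the higher-order correction coming from the generalized eigenvalue relation $(T^* - \lambda)^p y = 0$. Fix $y \in \ker(T^* - \lambda)^p$ and a vector $x \in \Hil$ with $\Orb(T,x) \cap \calM$ dense in $\calM$, and define the continuous $\C$-linear functional $\varphi : \calM \to \C$ by $\varphi(z) := \langle z, y \rangle$. Its image $\varphi(\calM)$ is either $\0$ or all of $\C$, and by continuity of $\varphi$ the set $\varphi(\Orb(T,x) \cap \calM) = \{\langle T^n x, y \rangle : T^n x \in \calM\}$ must be dense in $\varphi(\calM)$. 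It therefore suffices to show that this image cannot be dense in $\C$, which forces $\varphi(\calM) = \0$ and hence $y \in \calM^\perp$.

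For this, I would apply Lemma \ref{lemma:formula} to the operator $T^*$. For $n \geq p$,
$$
\langle T^n x, y \rangle = \langle x, T^{*n} y \rangle = \sum_{k=1}^p \binom{p}{k} (-1)^{k-1} \binom{n}{p} \frac{k}{n-p+k}\, \lambda^{n-p+k}\, \langle x, T^{*(p-k)} y \rangle.
$$
The key algebraic observation is that $n - p + k$ is precisely one of the factors $n, n-1, \ldots, n-p+1$ that appear in $p!\binom{n}{p}$, so $\binom{n}{p} \frac{k}{n-p+k}$ is a genuine polynomial in $n$ of degree $p-1$. Pulling out a common factor of $\lambda^n$ when $\lambda \neq 0$ yields
$$
\langle T^n x, y \rangle = \lambda^n Q(n) \qquad \text{for all } n \geq p,
$$
where $Q$ is a polynomial in $n$ of degree at most $p-1$, with coefficients depending linearly on the scalars $\langle x, T^{*(p-k)} y \rangle$. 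The case $\lambda = 0$ is handled separately and is simpler: the lemma then gives $T^{*n} y = 0$ for $n \geq p$, so only finitely many values $\langle T^n x, y \rangle$ are nonzero.

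It remains to verify that the sequence $\{\lambda^n Q(n)\}_{n \geq p}$ is never dense in $\C$. If $|\lambda| < 1$ it tends to $0$; if $|\lambda| > 1$, or if $|\lambda| = 1$ and $\deg Q \geq 1$, its modulus tends to $\infty$, so only finitely many terms lie in any bounded subset of $\C$; and if $|\lambda| = 1$ with $Q$ constant, the whole sequence lies on the circle of radius $|Q(0)|$. Adjoining the finitely many terms with $n < p$ does not alter this, and the contradiction follows. The main obstacle I anticipate is extracting a clean polynomial identity for $\binom{n}{p} \frac{k}{n-p+k}$ from Lemma \ref{lemma:formula}; once the reduction to the scalar sequence $\lambda^n Q(n)$ is in place, the density obstruction is essentially the same one that drove the case $p = 1$ in Proposition \ref{prop:kernel}.
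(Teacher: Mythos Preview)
Your argument is essentially the paper's: reduce via Lemma~\ref{lemma:formula} to showing that the scalar sequence $\langle T^n x, y\rangle = \conj{\lambda}^{\,n-p} Q(n)$ (with $Q$ a polynomial in $n$ of degree at most $p-1$) cannot be dense in $\C$, which forces $\varphi$ to be non-surjective and hence $y\in\calM^\perp$; your case analysis for the non-density is in fact more explicit than the paper's, which simply asserts it. One small slip: the inner product is conjugate-linear in the second argument, so your displayed sum should carry $\conj{\lambda}^{\,n-p+k}$ rather than $\lambda^{n-p+k}$ (and $\conj{\lambda}^{\,n}$ in the factored form), though this is harmless for the density argument since only $|\lambda|$ enters.
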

\begin{proof}
Assume that $\Orb(T,x) \cap \calM$ is dense in $\calM$. Set $\lambda
\in \C$, $p \in \N$ and let $y$ be in $\ker (T^*-\lambda)^p$.
As before, let $\varphi : \calM \to \C$ be the functional defined by
$\varphi(x) := \left< x, y \right>$. Again,  $\varphi$ is surjective
if and only if $y \notin \calM^\perp$.

Observe that, for $n \geq p$ we have, by Lemma \ref{lemma:formula}, 
\begin{eqnarray*}
\left< T^n x, y \right> 
&=& \left< x, T^{*n} y \right> \\
&=&  \sum_{k=1}^p \binom{p}{k} \binom{n}{p} \frac{k}{n-p+k}
(-1)^{k-1} \conj{\lambda}^{n-p+k} \left< x,  T^{*(p-k)} y \right>\\
&=& \conj{\lambda}^{n-p} \sum_{k=1}^p \binom{p}{k} \binom{n}{p} \frac{k}{n-p+k}
(-1)^{k-1} \conj{\lambda}^{k} \left<T^{p-k} x, y \right>.
\end{eqnarray*}
Since 
$$
 \binom{n}{p} = \frac{n(n-1) \cdots (n-p+k) \cdots (n-p+1)}{p!},
$$
for each $k$ less than or equal to $p$, it follows that $\ds\binom{n}{p}
\frac{1}{n-p+k}$ is a polynomial in the variable $n$ of degree $p-1$. Hence
$$
\left< T^n x, y \right> = \conj{\lambda}^{n-p} Q(n) 
$$
where $Q(n)$ is a polynomial in the variable $n$ of degree at most
$p-1$ with complex coefficients (the coefficients depend, of course,
on $\lambda$, $p$, $T$, $x$ and $y$).

Hence we have
\begin{multline*}
\varphi(\Orb(T,x) \cap \calM) = \\
\left\{ \left< T^j x, y \right> \, : \, \hbox{ there exists $j=0, 1, \dots,
  p-1$ such that } T^j x \in \calM \right\}  \\
\cup \left\{ \conj{\lambda}^{n-p}  Q(n) \, : \, \hbox{ there exists $n \geq p$
  such that } T^n x \in  \calM \right\}.
\end{multline*}
It can be checked that the above set is never dense in $\C$.

But, if $\Orb(T,x) \cap \calM$ were dense in $\calM$, then
$\varphi(\Orb(T,x) \cap \calM)$ would be dense in $\C$, which we just
agreed was impossible. Thus  $\varphi$ is not surjective and
hence $y \in \calM^\perp$. This finishes the proof.
\end{proof}

\begin{theorem}\label{theo:finite_dim}
Let $\Hil$ be finite-dimensional. If $T\in \BH$ then $T$ is not
subspace hypercyclic for any $\calM$.
\end{theorem}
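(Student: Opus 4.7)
My plan is to deduce Theorem~\ref{theo:finite_dim} directly from Proposition~\ref{prop:gen_kernel} by combining it with the generalized eigenspace (Jordan) decomposition available in finite dimensions. Since $\Hil$ is finite-dimensional, the adjoint $T^*$ has a finite spectrum $\{\lambda_1, \ldots, \lambda_r\}$, and standard linear algebra gives the decomposition
\begin{equation*}
\Hil = \bigoplus_{i=1}^{r} \ker(T^* - \lambda_i)^{p_i}
\end{equation*}
for suitable $p_i \in \N$ (one may take each $p_i$ to be the algebraic multiplicity of $\lambda_i$, or simply $p_i = \dim \Hil$).

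Suppose for contradiction that $T$ is subspace-hypercyclic for some nonzero subspace $\calM$. By Proposition~\ref{prop:gen_kernel}, each summand $\ker(T^* - \lambda_i)^{p_i}$ lies in $\calM^\perp$. Taking the span, the entire space $\Hil$ lies in $\calM^\perp$, forcing $\calM^\perp = \Hil$ and therefore $\calM = \0$, contrary to the assumption that $\calM$ is nonzero.

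There is really no main obstacle here: the hard work of understanding how eigenvectors of $T^*$ interact with the subspace was already carried out in Proposition~\ref{prop:gen_kernel}, and the only additional ingredient is the elementary fact that a finite-dimensional space is spanned by the generalized eigenvectors of any operator acting on it. The proof is therefore just a two-line assembly.
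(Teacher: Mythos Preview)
Your proof is correct and follows essentially the same approach as the paper's own proof: both invoke the generalized eigenspace decomposition of $T^*$ in finite dimensions, apply Proposition~\ref{prop:gen_kernel} to force each summand into $\calM^\perp$, and conclude $\calM=\0$ by contradiction.
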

\begin{proof}
Since $\Hil$ is finite-dimensional and $T^* \in \BH$, it is well known (e.g.,
\cite[p.~174]{Axler}) that there exist complex numbers $\lambda_1, \lambda_2,
\dots, \lambda_s$ and natural numbers $p_1, p_2, \dots, p_s$ such that
$\Hil$ is the direct sum of the subspaces
$$
\ker(T^*-\lambda_1)^{p_1}, \ker(T^*-\lambda_2)^{p_2}, \dots, \ker(T^*-\lambda_s)^{p_s}.
$$
If $T$ was subspace-hypercyclic for some $\calM \neq \{0 \}$, then by
Proposition \ref{prop:gen_kernel}, for each
$j$, we have
$$
\ker(T^* - \lambda_j)^{p_j} \subseteq \calM^\perp.
$$
Hence $\Hil \subseteq \calM^\perp$ and thus $\calM=\{0\}$, a contradiction.
\end{proof}

\begin{theorem}
Let $T \in \BH$. If $T$ is subspace-hypercyclic for $\calM$, then
$\calM$ is not finite-dimensional.
\end{theorem}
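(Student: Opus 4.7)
The plan is to argue by contradiction: suppose $\calM$ is finite-dimensional, of dimension $n$, and let $x$ be a subspace-hypercyclic vector for $T$, which we may take to lie in $\calM$ by the observation immediately following the definition. The strategy is to exhibit a finite-dimensional $T$-invariant subspace $\calN$ containing $\calM$; then Proposition~\ref{prop:restriction} promotes $T\restricted_{\calN}$ to a subspace-hypercyclic operator for $\calM$ on a finite-dimensional space, in direct contradiction with Theorem~\ref{theo:finite_dim}.

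To produce such an $\calN$, I would first use the fact that every linear subspace of a finite-dimensional space is closed, so any dense subset necessarily spans the whole space. Applied to the dense set $\Orb(T,x) \cap \calM$, this lets us select natural numbers $m_1, \dots, m_n$ such that the vectors $v_i := T^{m_i} x$ form a basis of $\calM$.

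Using density once more, choose $N \in \N_0$ with $N > \max_i m_i$ and $T^N x \in \calM$, and expand $T^N x = \sum_{i=1}^n c_i T^{m_i} x$ in the basis $\{v_i\}$. This yields $P(T) x = 0$ for the polynomial $P(z) := z^N - \sum_{i=1}^n c_i z^{m_i}$, which is nonzero since its coefficient on $z^N$ is $1$ (as $N$ strictly exceeds every $m_i$). Hence the cyclic subspace $\calN := \span\{T^j x : 0 \leq j < N\}$ is $T$-invariant (the relation $P(T)x = 0$ expresses $T^N x$ in terms of lower powers, and higher powers then reduce inductively) and finite-dimensional; it contains every $v_i$, so $\calM \subseteq \calN$.

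Combining Proposition~\ref{prop:restriction} with Theorem~\ref{theo:finite_dim} now yields the contradiction and completes the argument. I do not foresee a genuine obstacle; the one substantive observation is that in finite dimensions ``dense'' upgrades to ``spanning,'' and this single fact suffices to produce an algebraic dependency that collapses the cyclic subspace generated by $x$ into something finite-dimensional, at which point the earlier finite-dimensional result does all the work.
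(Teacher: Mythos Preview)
Your proof is correct and follows essentially the same approach as the paper's: both extract a nontrivial polynomial relation $P(T)x=0$ from the fact that infinitely many iterates $T^{n}x$ fall into the finite-dimensional space $\calM$, conclude that the cyclic subspace $\calN=\span\{x,Tx,\dots,T^{N-1}x\}$ is a finite-dimensional $T$-invariant subspace containing $\calM$, and then appeal to Proposition~\ref{prop:restriction} and Theorem~\ref{theo:finite_dim} for the contradiction. The only cosmetic difference is that the paper obtains the linear dependence directly (any $n+1$ elements of $\Orb(T,x)\cap\calM$ are dependent), whereas you first pass through a basis of $\calM$ drawn from the orbit; the two arguments are interchangeable.
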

\begin{proof}
Assume $T$ was subspace-hypercyclic for a finite-dimensional subspace
$\calM$ and let $x\in \calM$ such that $\Orb(T,x) \cap \calM$ is dense
in $\calM$. It then follows that the infinite set $\Orb(T,x) \cap
\calM$ in the finite-dimensional space $\calM$ has a finite
subset of (nonzero) linearly dependent vectors, say $\{ T^{n_1}x,
T^{n_2}x, \dots, T^{n_k}x \}$. Let $m=\max\{n_1,n_2, \dots,
n_k\}$. An easy induction argument shows that $\Orb(T,x) \subseteq
\span \{x, Tx, T^2x, \dots, T^{m-1}x\}$.
Hence the closed
linear span $\calN$ of $\Orb(T,x)$ is finite-dimensional. Observe that
the density in $\calM$ of $\Orb(T,x) \cap \calM$ implies that $\calM
\subseteq \calN$. Since clearly $\calN$ is an invariant subspace for $T$, Proposition
\ref{prop:restriction} shows that $T\restricted_\calN$ is
subspace-hypercyclic for $\calM$. But Theorem \ref{theo:finite_dim}
contradicts this, and so the proof is finished.
\end{proof}

For the following theorem, we assume the reader is familiar with some
basic facts about compact operators on Hilbert space, which can be
found in many standard references. A particularly nice exposition can
be found in \cite[p.~140--142]{RR2}.

\begin{theorem}
Let $T \in \BH$. If $T$ is compact, then $T$ is not
subspace-hypercyclic for any subspace.
\end{theorem}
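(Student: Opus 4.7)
The plan is to combine the spectral structure of compact operators with Theorem~\ref{th:hyp_spec}. Assume for contradiction that $T$ is compact and subspace-hypercyclic for some $\calM$ with subspace-hypercyclic vector $x$; by the preceding theorem $\calM$ is infinite-dimensional, and hence uncountable as a set. The nonzero part of $\sigma(T)$ consists of isolated eigenvalues whose only possible accumulation point is $0$, so $K:=\sigma(T)\cap\{z:|z|\geq 1\}$ is a finite set, and Theorem~\ref{th:hyp_spec} forces $K\neq\emptyset$.

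Next I would invoke the Riesz Decomposition Theorem to split $\Hil=E\oplus F$ as a topological direct sum of $T$-invariant subspaces, where $E$ is the finite-dimensional spectral subspace corresponding to $K$ and $\sigma(T|_F)\subseteq\D$. The spectral radius formula then gives $\|T^n|_F\|\to 0$. Let $\pi_E$ and $\pi_F$ be the associated bounded projections and write $x=x_E+x_F$ with $x_E\in E$ and $x_F\in F$.

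Because $\calM$ is uncountable while $\Orb(T,x)$ is countable, I would pick some $v\in\calM\setminus\Orb(T,x)$. Density of $\Orb(T,x)\cap\calM$ in $\calM$, together with $v\notin\Orb(T,x)$, yields a sequence $n_k\to\infty$ with $T^{n_k}x\to v$. Since $T^{n_k}x_F=(T|_F)^{n_k}x_F\to 0$, subtracting gives $T^{n_k}x_E\to v$; each $T^{n_k}x_E$ lies in the $T$-invariant subspace $E$, which is closed (being finite-dimensional), so $v\in E$.

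This establishes $\calM\setminus\Orb(T,x)\subseteq E$. The left-hand side is dense in $\calM$ (the complement of a countable set in a separable infinite-dimensional subspace is dense), so the closedness of $E$ forces $\calM\subseteq E$, contradicting $\dim\calM=\infty$ and $\dim E<\infty$. I expect the main technical ingredient to be the Riesz Decomposition, specifically the clean separation of the part of the spectrum inside $\D$ (on which the powers contract to zero) from the finite complementary part; once that bookkeeping is in place, the dynamical argument collapses to the single observation that any convergent subsequence of the orbit must live in the closed finite-dimensional subspace $E$.
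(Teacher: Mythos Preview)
Your proof is correct, and it follows a genuinely different route from the paper's argument. The paper proceeds by peeling off one unimodular eigenvalue $\lambda$ at a time: using Proposition~\ref{prop:gen_kernel} it shows $\calL\subseteq\ran(T-\lambda)^N$ for the Riesz index $N$, then invokes Theorem~\ref{theo:complementary} to pass to the restriction $T\!\restricted_{\ran(T-\lambda)^N}$, which is still compact and subspace-hypercyclic for $\calL$ but has one fewer eigenvalue on $S^1$; iterating finitely many times yields a compact subspace-hypercyclic operator whose spectrum misses $S^1$, contradicting Theorem~\ref{th:hyp_spec}. You instead make a single Riesz split $\Hil=E\oplus F$ with $E$ finite-dimensional and $\|T^n\!\restricted_F\|\to 0$, and then argue dynamically that every limit of a subsequence $T^{n_k}x$ with $n_k\to\infty$ must lie in $E$, forcing $\calM\subseteq E$. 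Your version is shorter and avoids both Proposition~\ref{prop:gen_kernel} and Theorem~\ref{theo:complementary}, trading them for the preceding result that $\calM$ must be infinite-dimensional; the paper's version, on the other hand, exhibits more of the operator-theoretic structure (in particular that $\calL$ sits inside the successive range spaces) and makes the role of Theorem~\ref{th:hyp_spec} explicit at the punchline rather than implicit. As a minor remark, your appeal to Theorem~\ref{th:hyp_spec} to ensure $K\neq\emptyset$ is not actually needed: if $K=\emptyset$ then $E=\{0\}$ and the same inclusion $\calM\subseteq E$ already gives the contradiction.
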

\begin{proof}
Assume that $T$ is compact and subspace-hypercyclic for some
$\calL$. Since $T$ is compact, Theorem \ref{th:hyp_spec} implies
that there exists $\lambda$, an eigenvalue of $T$, such that
$\lambda \in S^1$. 

Since $T$ is compact there exists $N \in \N$ such that
$\ran(T-\lambda)^N=\ran(T-\lambda)^{N+k}$ for all $k \in \N$. Also, Proposition
\ref{prop:gen_kernel} gives that $\ker((T-\lambda)^*)^N \subseteq
\calL^\perp$ and hence $\calL \subseteq (\ker((T-\lambda)^*)^N)^\perp =
\ran(T-\lambda)^N$, since $\ran(T-\lambda)^N$ is closed. Since
$\calN:=\ker(T-\lambda)^N$ and $\calM:=\ran(T-\lambda)^N$ are
complementary and invariant under $T$, and since we showed that $\calL
\subseteq \calM$, we can apply Theorem \ref{theo:complementary} to
obtain that $T\restricted_\calM$ is subspace-hypercyclic for
$\calL$. Also, $\sigma(T\restricted_\calM)=\sigma(T)\setminus\{
\lambda\}$ and $T\restricted_\calM$ is a compact operator. 

The process above can be repeated starting with the compact operator
$T\restricted_\calM$. Since the original operator $T$ is compact,
it can have only a finite number of eigenvalues of modulus equal to
$1$. Hence, eventually, repeating this process leaves us with a
compact operator which is subspace-hypercyclic for $\calL$ but whose
spectrum does not intersect $S^1$, contradicting Theorem
\ref{th:hyp_spec}.
\end{proof}

We should mention that hyponormal operators (and hence normal
operators) cannot be subspace-hypercyclic since the norms of the
elements of the orbit are either decreasing or eventually increasing,
as proved by P.~Bourdon~\cite{Bourdon}.

\section{Some questions}

\begin{enumerate}

\item Let $T$ be an invertible operator. If $T$ is
  subspace-hypercyclic for some $\calM$, is $T^{-1}$
  subspace-hypercyclic for some space? If so, for which space?

\item If $T$ is subspace-hypercyclic for some $\calM$ and $\lambda$ is
  of modulus $1$, is $\lambda T$ subspace-hypercyclic for $\calM$?

\item If $T$ is hypercyclic, must there be a proper subspace $\calM$ such
  that $T$ is subspace-hypercylic for $T$?

\item Coanalytic Toeplitz operators can be subspace-hypercyclic (see
  Example~\ref{ex:toep}). Is there a classification of when this
  occurs (for example, {\it \'a la} Godefroy-Shapiro \cite{GoSh})?

\item Can Proposition~\ref{prop:gen_kernel} be generalized? That is,
  if $T$ is subspace-hypercyclic for some $\calM$ and $q$ is
  a complex polynomial, is it true that $\ker q(T^*) \subseteq
  \calM^\perp$? This question was suggested to us by the referee.

\end{enumerate}


\begin{thebibliography}{23}

\bibitem{Axler} Sheldon Axler, {\em Linear Algebra Done Right}, second edition,
  Springer, New York, 1997.

\bibitem{BaMa} F.~Bayart and \'E.~Matheron, {\em Hypercyclic operators
    faliling the Hypercyclicity Criterion on classical Banach spaces},
  J.~Funct.~Anal. {\bf 250} (2007) 426--441.

\bibitem{Bourdon} Paul S.~Bourdon, {\em Orbits of hyponormal
    operators}, Michigan Math.~J. {\bf 44} (1997) 345--353.

\bibitem{BoFe} Paul S.~Bourdon and Nathan S.~Feldman, {\em Somewhere
    dense orbits are everywhere dense}, Indiana Univ.~Math.~J.~{\bf
    52} (2003) 811--819.

\bibitem{dlRRe} Manuel de la Rosa and Charles Read, {\em A hypercyclic
    operator whose direct sum $T \oplus T$ is not hypercyclic},
    J. Operator Theory {\bf 61} (2009) 369--380.

\bibitem{GeSh} Robert M.~Gethner and Joel H.~Shapiro, {\em Universal
    vectors for operators on spaces of holomorphic functions},
  Proc.~Amer.~Math.~Soc {\bf 100} (1987) 281--288.

\bibitem{GoSh} Gilles Godefroy and Joel H.~Shapiro, {\em Operators with
    dense, invariant, cyclic vector manifolds}, J.~Funct.~Anal. {\bf
    98} (1991) 229-269.

\bibitem{GE3} Karl-Goswin Grosse-Erdmann, {\em Holomorphe Monster und
    universelle Funcktionen}, Dissertation, University of Trier,
  Trier, 1987, Mitt.~Math.~Sem.~Giessen {\bf 176} (1987).

\bibitem{GE1} Karl-Goswin Grosse-Erdmann, {\em Universal families and
    hypercyclic operators}, Bull.~Amer.~Math.~Soc.~(N.S.) {\bf 36}
  (1999) 345--381.

\bibitem{GE2} K.-G.~Grosse-Erdmann, {\em Recent developments in
    hypercyclicity}, RACSAM Rev.~R.~Acad. Cienc.~Exactas
  F\'is.~Nat.~Ser.~A Mat.~{\bf 97} (2003), 273--286. 

\bibitem{Halmos} P.R. Halmos, {\em A Hilbert Space Problem Book},
  second edition, Springer, New York, 1982.

\bibitem{herrero} Domingo A.~Herrero, {\em Limits of hypercyclic and
    supercyclic operators}, J.~Funct.~Anal.~{\bf 99} (1991) 179--190.

\bibitem{JM} Ronald R. Jim\'enez-Mungu\'ia, {\em Vectores
    Hiperc\'iclicos para M\'ultiplos del Desplazamiento hacia
    atr\'as}, B. Sc. Thesis, Universidad Aut\'onoma del Estado de
  Hidalgo, Pachuca, 2007.

\bibitem{kitai} Carol Kitai, {\em Invariant Closed Sets for
    Operators}, Ph.~D.~Dissertation, University of Toronto, 1982.
    
\bibitem{le} Can M. Le, {\em On Subspace-Hypercyclic Operators}, preprint.

\bibitem{MA} Rub\'en A. Mart\'inez-Avenda\~no, {\em A Generalization
    of Hankel Operators}, J.~Funct.Anal. {\bf 190} (2002) 418--446.

\bibitem{MR} Rub\'en A. Mart\'inez-Avenda\~no and Peter Rosenthal,
  {\em An Introduction to Operators on the Hardy--Hilbert Space},
  Springer, New York, 2007.

\bibitem{MP} F. Mart\'inez-Gim\'enez and A. Peris, {\em Chaos for
    Backward Shift Operators}, Internat. J. Bifur. Chaos
  Appl. Sci. Engrg. {\bf 12} (2002) 1703--1715.

\bibitem{read} C.~J.~Read, {\em The invariant subspace problem for a
    class of Banach spaces. II: Hypercyclic operators}, Israel
  J.~Math. {\bf 63} (1988) 1--40.

\bibitem{rolewicz} Stefan Rolewicz, {\em On Orbits of elements}, Studia
  Math. {\bf 32} (1969) 17--22.

\bibitem{RR} Heydar Radjavi and Peter Rosenthal, {\em Invariant
    Subspaces}, second edition, Dover, Mineola, NY, 2002.

\bibitem{RR2} Heydar Radjavi and Peter Rosenthal, {\em Simultaneous
    Triangularization}, Springer, New York, 2000.

\bibitem{shapiro} Joel H. Shapiro, {\em Notes on the Dynamics of
    Linear Operators}, Michigan State University, 2001. Available at \\
{\tt http://www.mth.msu.edu/\~{}shapiro/Pubvit/Downloads/LinDynamics/LynDynamics.html}

\bibitem{shkarin} Stanislav Shkarin, {\em On the spectrum of
    frequently hypercyclic operators}, Proc.~Amer.~Math. Soc.~{\bf 137}
  (2009) 123--134.

\end{thebibliography}
\end{document}